\newtheorem{thm}{Theorem}[section]
\newtheorem{cor}[thm]{Corollary}
\newtheorem{lem}[thm]{Lemma}
\theoremstyle{mydefinition}
\theoremstyle{myremark}
\renewcommand{\P}{{\mathbb{P}}}
\title{Enumeration of Corona for Lozenge Tilings}
\author{Craig Knecht$^{\color{blue} \S}$, Feihu Liu$^{\color{blue} \dag}$, and Guoce Xin$^{\color{blue} \P}$
\\[2mm]
{\small $^{\color{blue} \S}$ 691 Harris Lane, Gallatin, Tennessee 37066, USA}\\
{\small $^{\color{blue} \dag, \P}$ School of Mathematical Sciences,}\\[-0.8ex]
{\small Capital Normal University, Beijing, 100048, P.R.~China}\\
{\small {\color{blue} $^\S$} Email address: craigknecht03@gmail.com}\\
{\small {\color{blue} $^\dag$} Email address: liufeihu7476@163.com}\\
{\small {\color{blue} $^\P$} Email address: guoce\_xin@163.com}
}
\date{April 27, 2025}
\begin{document}

\maketitle

\begin{abstract}
Knecht considers the enumeration of coronas. This is a counting problem for two specific types of lozenge tilings. Their exact closed formulas are conjectured in [A380346] and [A380416] on the OEIS. We prove this conjecture by using the weighted adjacency matrix. Furthermore, we extend this result to a more general setting.
\end{abstract}

\noindent
\begin{small}
 \emph{2020 Mathematics subject classification}: Primary 05A15; Secondary 05B45.
\end{small}

\noindent
\begin{small}
\emph{Keywords}: Lozenge tilings; Walks in graphs; Adjacency matrix; Generating functions.
\end{small}

\section{Introduction}
We first introduce some basic definitions (for instance, see \cite{Fulmek21}). 
The \emph{triangular lattice} can be regarded as a tiling of the Euclidean plane $\mathbb{R}^2$ using unit equilateral triangles. A \emph{lozenge} (or \emph{diamond}) is a union of any two unit equilateral triangles sharing an edge.
A \emph{region} in the triangular lattice is defined as a finite subset of triangles.
A \emph{lozenge tiling} of such a region $R$ is a partition of $R$ into blocks, where each block is a lozenge; that is to say, a lozenge tiling of a region $R$ is a covering of the region $R$ by lozenges so that there are no gaps or overlaps.

There are precisely two possible orientations of the unit equilateral triangles, i.e., upwards-pointing and downwards-pointing. Therefore, there are three possible orientations for lozenges, namely left-tilted, right-tilted, and vertical. See Figure \ref{Tu1}.
\begin{figure}[htp]
\centering
\includegraphics[width=7.5cm,height=2.5cm]{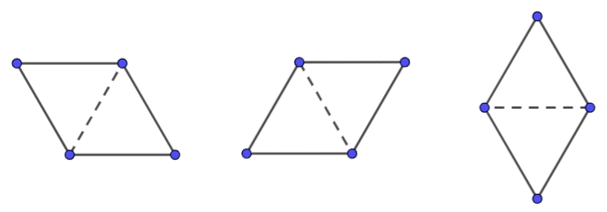}
\caption{Left-tilted, right-tilted, and vertical.}
\label{Tu1}
\end{figure}

The basic problem in this context is the enumeration of all lozenge tilings for some given region $R$.
This is a well-studied topic in combinatorics. It has attracted considerable interest due to the elegance of the formulas and the ingenuity of the combinatorial arguments, (see, for example, \cite{CiucuLai19,Fulmek21,Lai17,Rohatgi15}).
Many methods for tiling enumerations have been developed and studied. For instance, Kuo's graphical condensation method \cite{Kuo04}. Ciucu, Eisenk\"olbl, Krattenhaler, and Zare \cite{CEKZ01} provide closed formulas for certain lozenge tilings by employing the theory of nonintersecting lattice paths and determinant evaluations.
Lai and his co-author investigated tiling enumerations in a series of papers, (see, for instance, \cite{Lai14,Lai15,Lai-Rohatgi19,Lai21,Lai-Rohatgi21,CiucuLai20}).

The motivation for this work comes from the two recent studies on enumeration of corona \cite[A380346, A380416]{Sloane23}.
The word ``corona" was first used by Knecht. Given a regular hexagon $H$, a \emph{corona of a hexagon $H$} is a lozenge tiling along the edges of $H$ such that no additional lozenges are utilized. For example, see Figure \ref{Tu2}, Figure $a$ is a corona of a hexagon $H$ with side length $1$; Figure $b$ is a corona of a hexagon $H$ with length $2$; but Figure $c$ is not a corona since the lozenge $e$ is redundant; Figure $d$ is also not a corona because a lozenge is missing in the bottom left corner. The number of coronas of a hexagon $H$ with side length $n$ is denoted by $H(n)$.
\begin{figure}[htp]
\centering
\includegraphics[width=16cm,height=5cm]{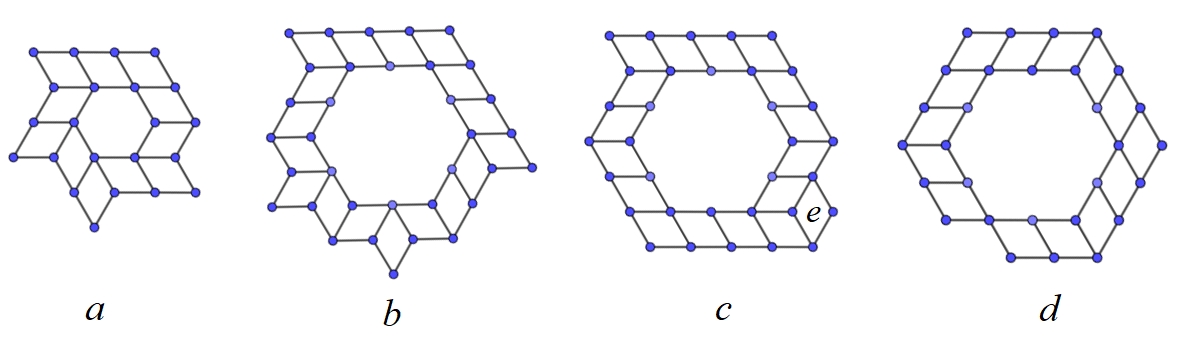}
\caption{Corona of a hexagon $H$.}
\label{Tu2}
\end{figure}

The first result of this paper is to give an exact closed formula for $H(n)$. In other words, we prove the following conjecture.  

\begin{thm}{\em (Conjectured in \cite[A380346]{Sloane23})}\label{Them-A380346}
Let $n\in \mathbb{N}$. Let $H(n)$ be the number of coronas of a hexagon $H$ with side length $n$.
Then there are only four cases in which the number of lozenges is used in a corona of a hexagon $H$ with side length $n$, namely $6n+3$, $6n+4$, $6n+5$, and $6n+6$.
Let $h_i(n)$ be the number of corona tilings for $6n+2+i$ with $1\leq i\leq 4$. Then we have
$$h_1(n)=2,\ \ \ h_2(n)=9(n+1)^2,\ \ \ h_3(n)=6(n+1)^4,\ \ \ h_4(n)=(n+1)^6.$$
Furthermore, we obtain
\begin{align*}
H(n)=h_1(n)+h_2(n)+h_3(n)+h_4(n)=n^6+6n^5+21n^4+44n^3+60n^2+48n+18.
\end{align*}
\end{thm}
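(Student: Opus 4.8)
The plan is to reduce the enumeration of coronas to a walk-counting problem on an auxiliary graph, which is exactly the ``weighted adjacency matrix'' approach promised in the abstract. First I would analyze the local structure of a corona along each of the six sides and the six corners of the hexagon $H$ of side length $n$. Along a single side of length $n$, the strip of triangles immediately outside $H$ must be tiled; I would show that the only freedom is a sequence of local choices, and that the number of lozenges contributed by a side is either its minimal value or one more, giving the ``$+i$'' degrees of freedom. This is what forces the total lozenge count to lie in $\{6n+3,6n+4,6n+5,6n+6\}$: three of the six sides (say the alternating ones, by a parity/orientation argument on up- and down-pointing triangles) always contribute a fixed number, while the other three sides each contribute an independent binary choice — no, more carefully, I expect the six corners to be the rigid pieces and each of the six sides to carry one binary ``extra lozenge'' choice, but with a global constraint (coming from how the corners patch together) that kills some configurations; the bookkeeping here is the first place to be careful.

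Next, for the refined counts $h_i(n)$, I would set up a transfer-matrix / lattice-path model. The key step is to encode a corona as a closed walk in a graph whose vertices are the possible ``interface states'' at the triangular-lattice lines crossing the corona region, with edge weights recording how many lozenges of each orientation are used. Then $H(n)$ becomes a constant term (trace of a product of transfer matrices), and the refinement by lozenge count is obtained by introducing a formal variable $q$ marking lozenges: $\sum_i h_i(n) q^{6n+2+i}$ is a weighted trace. Because the side pieces are essentially independent once the corners are fixed, I expect this weighted trace to factor as a product over the six sides times a corner-gluing factor, and each side contributes a factor of the form $(a + b q)$ with $a,b$ affine in $n$ (reflecting the $n$ independent positions along the side where an extra vertical lozenge can be inserted). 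Matching the $q^0,\dots,q^4$ coefficients of the resulting polynomial against $2,\ 9(n+1)^2,\ 6(n+1)^4,\ (n+1)^6$ then pins down the model; the clean powers $(n+1)^0,(n+1)^2,(n+1)^4,(n+1)^6$ strongly suggest that ``adding $i$ extra lozenges'' amounts to choosing $i$ of (roughly) three pairs of side-slots, i.e.\ an elementary-symmetric-function pattern $\binom{3}{i}(n+1)^{2i}$, which indeed gives $1,3(n+1)^2,3(n+1)^4,(n+1)^6$ — off from the claim by the constant factors $2,3,2,1$, so the corner gluing must supply an extra independent factor of $2$ in the $i=1,2$ terms. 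Sorting out precisely where those corner factors of $2$ come from, and why they interact with only some of the side choices, is the crux.

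Concretely the steps are: (1) classify the tilings of a single bordered side, proving the per-side lozenge count takes exactly two values and counting each ($1$ way for the minimum, $n+1$ ways for one-more — the $(n+1)$ being the position of the inserted lozenge, including the two ends); (2) classify the six corner gadgets, showing each corner is tiled essentially uniquely but contributes a global sign/parity constraint and in some cases a factor of $2$; (3) assemble (1) and (2) into the generating polynomial $\sum_i h_i(n)q^{6n+2+i}$, using that distinct sides make independent choices subject to the corner constraints; (4) read off $h_1,h_2,h_3,h_4$ and sum to get $H(n)=n^6+6n^5+21n^4+44n^3+60n^2+48n+18$, which is a routine polynomial expansion of $2+9(n+1)^2+6(n+1)^4+(n+1)^6$.

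The main obstacle I anticipate is step (2) together with the independence claim in step (3): showing rigorously that the corner regions are forced up to the stated multiplicities, and that the ``extra lozenge'' choices on different sides really are independent (no long-range propagation of a local choice around the hexagon) except for the explicit corner-induced couplings. This is exactly the kind of rigidity statement that is intuitively clear from a picture but needs a careful argument — most likely via a height-function or non-intersecting-lattice-path reformulation of the corona region, where ``rigid'' becomes ``the lattice paths are forced'' and the binary choices become the only free path segments. Once that structural lemma is in hand, the algebra in steps (3)–(4) is mechanical.
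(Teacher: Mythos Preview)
Your high-level strategy---encode coronas as closed walks in a weighted graph and read off the answer as a trace---is exactly what the paper does. The gap is in your structural decomposition, and it is precisely the inversion you were worried about in steps (1)--(2).

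In the paper the \emph{vertices} of the transfer graph are the corner configurations, and there are \emph{five} of them at each corner, not ``essentially unique up to a factor of $2$''. The \emph{edges} are the side configurations: along a side of length $n$ there are $n+1$ tilings, of which \emph{two} use $n-2$ lozenges and $n-1$ use $n-1$ lozenges (not ``$1$ minimal and $n+1$ one-more''). Which side tiling can appear depends on the pair of corner states at its endpoints, and this compatibility is recorded in a single $5\times5$ weighted matrix $M$ (weight $x^r$, with $r$ the lozenge count of the target corner plus the connecting side). A corona is then literally a closed walk of length $6$, so the refined generating polynomial is $\mathrm{tr}(M^6)$, which a direct computation shows equals $2x^{6n+3}+9(n+1)^2x^{6n+4}+6(n+1)^4x^{6n+5}+(n+1)^6x^{6n+6}$.

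Your attempt to match the answer to $\binom{3}{i}(n+1)^{2i}$ times a corner correction cannot be completed, because the sides are \emph{not} independent once the corners are fixed: the five corner states split into two groups ($\{A,B,C\}$ versus $\{D,E\}$ in the paper's labeling), and the minimal side tilings connect one group to the other while the $L_i$'s connect $\{D,E\}$ to $\{B,D\}$. So the ``extra lozenge'' choices genuinely propagate around the hexagon, and the coupling you were trying to isolate as an ad hoc ``corner factor of $2$'' is the entire transfer mechanism. Once you replace your steps (1)--(2) by the $5$-state corner classification and the $(2,\,n-1)$ side classification, the rest is the matrix multiplication you anticipated, and your step (4) goes through unchanged.
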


Similarly, when we are given a diamond $D$ with $60^\circ$ and $120^\circ$ angles, a \emph{corona of a diamond $D$} is a lozenge tiling along the edges of $D$ such that no additional lozenges are utilized. For example, see Figure \ref{Tu3}, Figure $p$ is a corona of a diamond $D$ with side length $1$; Figure $q$ is a corona of a diamond $D$ with side length $2$; but Figure $u$ is not a corona since the lozenge $w$ is redundant; Figure $v$ is also not a corona because a lozenge is missing in the bottom right corner. The number of coronas of a diamond $D$ with side length $n$ is denoted by $D(n)$.
\begin{figure}[htp]
\centering
\includegraphics[width=16cm,height=6cm]{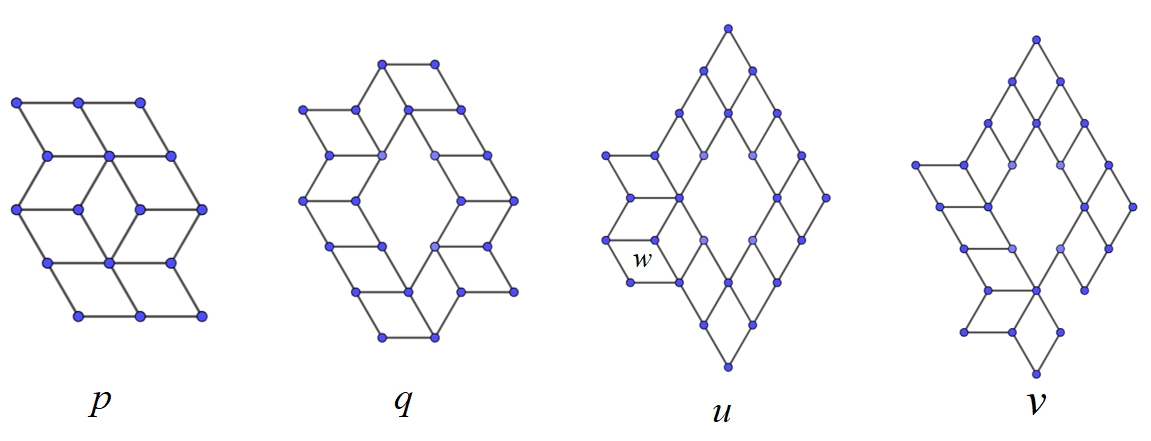}
\caption{Corona of a diamond $D$.}
\label{Tu3}
\end{figure}

Our second result is to provide an exact closed formula for $D(n)$.

\begin{thm}{\em (Conjectured in \cite[A380416]{Sloane23})}\label{Them-A380416}
Let $n\in \mathbb{N}$. Let $D(n)$ be the number of coronas of a diamond $D$ with side length $n$.
Then there are only four cases in which the number of lozenges is used in a corona of a diamond $D$ with side length $n$, namely $4n+3$, $4n+4$, $4n+5$, and $4n+6$.
Let $d_i(n)$ be the number of corona tilings for $4n+2+i$ with $1\leq i\leq 4$. Then we have
$$d_1(n)=2,\ \ \ d_2(n)=(2n+3)^2,\ \ \ d_3(n)=2(n+1)^2(2n+3),\ \ \ d_4(n)=(n+1)^4.$$
Furthermore, we get
\begin{align*}
D(n)=d_1(n)+d_2(n)+d_3(n)+d_4(n)=n^4+8n^3+24n^2+32n+18.
\end{align*}
\end{thm}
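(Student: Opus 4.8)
The plan is to run the weighted adjacency matrix (transfer matrix) machinery, in direct parallel with the proof of Theorem~\ref{Them-A380346}, now adapted to the four-sided region $D$. The first step is to understand a corona locally along one straight edge of $D$. A unit edge of the triangular lattice lying on $\partial D$ has exactly one triangle adjacent to it from outside $D$, and a lozenge of the corona covering that triangle must use one of two of the three possible orientations (the third is blocked by $D$); forcing these choices to be compatible as one moves along an edge of length $n$ shows that the corona, restricted to that edge, is a band of exactly $n$ lozenges whose tilt word $s_1\cdots s_n\in\{\mathsf{L},\mathsf{R}\}^n$ avoids one forbidden consecutive pair, i.e.\ $s=\mathsf{L}^{a}\mathsf{R}^{b}$ with $a+b=n$. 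Thus each of the four edges admits $n+1$ band patterns and accounts for $n$ lozenges (this is the source of the common term $4n$ and of the factors $n+1$), while handing to each of the four corners a small amount of interface data, namely which of the two extreme triangles at that corner the incident bands have already covered.

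The second step is the corner analysis, and here one must distinguish $D$'s two $60^\circ$ corners from its two $120^\circ$ corners: at each corner one enumerates the legal ways to join the two incident bands so that every triangle adjacent to $\partial D$ near the corner is covered and no lozenge is redundant, recording how many extra lozenges each completion uses. Encoding a corona as a closed walk along $\partial D$ — each of the $n$ steps along an edge governed by the transfer matrix $A=q\bigl(\begin{smallmatrix}1&1\\0&1\end{smallmatrix}\bigr)$ (weight $q$ per lozenge) and each corner governed by an explicit matrix, $B$ for a $120^\circ$ corner and $C$ for a $60^\circ$ corner, whose entries are monomials $q^{k}$ — turns the lozenge‑refined count (after setting up the state space) into the trace of an alternating product of four copies of $A^{n}=q^{n}\bigl(\begin{smallmatrix}1&n\\0&1\end{smallmatrix}\bigr)$ with the four corner matrices,
\begin{equation*}
\sum_{i\ge 1}d_i(n)\,q^{\,4n+2+i}\;=\;\operatorname{tr}\!\bigl(A^{n}CA^{n}BA^{n}CA^{n}B\bigr).
\end{equation*}
Since the corner‑matrix entries are supported on exponents whose totals over the four corners range only over $\{3,4,5,6\}$, every corona of $D$ uses $4n+3$, $4n+4$, $4n+5$, or $4n+6$ lozenges, which is the first assertion. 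Reading off $d_i(n)$ as the coefficient of $q^{4n+2+i}$ and noting that the unipotent factor $\bigl(\begin{smallmatrix}1&n\\0&1\end{smallmatrix}\bigr)$ occurs exactly four times, each $d_i(n)$ is automatically a polynomial in $n$ of degree at most $4$; evaluating the trace at $n=1,\dots,5$ (which over‑determines degree‑$\le 4$ polynomials) then forces $d_1(n)=2$, $d_2(n)=(2n+3)^2$, $d_3(n)=2(n+1)^2(2n+3)$, $d_4(n)=(n+1)^4$, and adding these and simplifying gives $D(n)=n^4+8n^3+24n^2+32n+18$.

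The main obstacle is the corner bookkeeping in the second step. Unlike the hexagon, all six of whose corners are $120^\circ$ so that the analogous answer is built purely out of powers of $n+1$, the diamond mixes $60^\circ$ and $120^\circ$ corners, and one must verify, corner type by corner type and interface state by interface state, exactly which completions are valid and how many lozenges each costs. This is precisely what produces the factor $2n+3=2(n+1)+1$ (and, more generally, the fact that $d_2,d_3$ are not pure powers of $n+1$): at certain corners the tilt‑switch positions of the two adjacent edges are not independent but are tied together, so that only a single parameter ranging over $2n+3$ values survives. Once the matrices $A$, $B$, $C$ have been correctly determined, the remainder is routine $2\times 2$ matrix arithmetic and polynomial identity verification.
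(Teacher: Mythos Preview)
Your outline is in the same spirit as the paper---a transfer/adjacency matrix along $\partial D$ whose trace gives the $q$-refined count---but it is not a proof as it stands, and the parameterisation you chose differs from the paper's in a way that hides exactly the work you still owe.

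The paper does \emph{not} reduce to a $2$-state edge transfer with $2\times 2$ corner matrices. It records, at each corner, the full local configuration of corona lozenges: five states at a $120^{\circ}$ corner and eight states at a $60^{\circ}$ corner. The edge patterns (your $L^aR^b$) are then encoded as $n+1$ labelled edges between the corner-state sets, giving explicit weighted matrices $R$ (an $8\times 5$ matrix from a $60^{\circ}$ corner to a $120^{\circ}$ corner) and $T$ (a $5\times 8$ matrix the other way), each entry a single monomial in $x$ times $1$ or $n-1$. One then computes $\mathrm{tr}\bigl((RT)^{2}\bigr)$ symbolically and reads off the $d_i(n)$ directly---no interpolation is used.

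Your $2\times 2$ scheme may well be equivalent after collapsing equivalent corner states, but you have not shown this: you never write down $B$ or $C$, you assert without proof that their entries are monomials (which is what would force the lozenge totals into $\{4n+3,\dots,4n+6\}$), and you then invoke interpolation at $n=1,\dots,5$ without performing either the trace evaluations or the interpolation. The sentence ``once the matrices $A,B,C$ have been correctly determined'' is precisely the content of the proof. In particular, at a $60^{\circ}$ corner the local picture is genuinely richer than at a $120^{\circ}$ corner (this is why the paper needs eight states there, not five), and it is not obvious a priori that the two interface bits you keep suffice to determine the corner completion uniquely---which is what ``entries are monomials'' would require. Until $B$ and $C$ are exhibited and the trace computed (or the five values tabulated), the argument is a plan, not a proof.
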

 
Inspired by Knecht's work, we study the extension for the corona. We alter the side lengths of the regular hexagon $H$ and the diamond $D$. We only require that the opposite sides of the hexagon and diamond remain equal in length, which ensures that their interior angles remain unchanged. The modified hexagon and diamond are denoted by $\overline{H}$ and $\overline{D}$, respectively; see Figure \ref{Tu13}. Our third result provides an exact closed formula for the number of coronas of a hexagon $\overline{H}$ and diamond $\overline{D}$, respectively; see Theorems \ref{ExtensionOne} and \ref{ExtensionTwo}. These are generalizations of Theorem \ref{Them-A380346} and Theorem \ref{Them-A380416}, respectively.

This paper is organized as follows. In Section 2, we introduce some results on the enumeration of walks in graphs. Section 3 is devoted to the proof of the main theorems. Finally, we extend Theorems \ref{Them-A380346} and \ref{Them-A380416} in Section 4.

\section{Preliminary: Walks in Graphs}

The goal of this section is to describe the number of walks in a graph (for instance, see \cite[Chapter 1]{RP.Stanley-AC}), which will later be useful to prove our main theorems.

A \emph{graph} $G=(V,E)$ consists of a \emph{vertex set} $V=\{v_1,v_2,\ldots,v_m\}$ and an \emph{edge set} $E$, where an edge is an unordered pair of vertices of $G$.
We will use the same notation as in \cite[Chapter 1]{RP.Stanley-AC}.
The \emph{adjacency matrix} of the graph $G$ is the $m\times m$ matrix $A(G)$, whose $(i,j)$-entry $a_{ij}$ is equal to the number of edges incident to $v_i$ and $v_j$. Thus $A(G)$ is a symmetric matrix.

A \emph{walk} in $G$ of \emph{length} $\ell$ from vertex $u$ to vertex $v$ is a sequence $v_1,e_1,v_2,e_2,\ldots,v_{\ell},e_{\ell},v_{\ell+1}$ such that $v_i\in V$, $e_j\in E$, the vertices of $e_i$ are $v_i$ and $v_{i+1}$, for $1\leq i\leq \ell$, and $v_1=u$, $v_{\ell}=v$.

\begin{lem}{\em (\cite[Theorem 1.1]{RP.Stanley-AC})}\label{WalksinG}
For any positive integer $\ell$, the $(i,j)$-entry of the matrix $A(G)^{\ell}$ is equal to the number of walks from $v_i$ to $v_j$ in $G$ of length $\ell$.
\end{lem}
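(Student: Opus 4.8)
The plan is to prove this by induction on the length $\ell$, which is the standard argument and exactly matches the fact that matrix multiplication encodes concatenation of walks. For the base case $\ell = 1$, the $(i,j)$-entry of $A(G)^1 = A(G)$ is by definition $a_{ij}$, the number of edges joining $v_i$ and $v_j$; a walk of length $1$ from $v_i$ to $v_j$ is precisely a choice of one such edge, so the two quantities agree.

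For the inductive step, suppose the claim holds for length $\ell$, and write $A = A(G)$. The entry $(A^{\ell+1})_{ij} = \sum_{k=1}^{m} (A^{\ell})_{ik}\, a_{kj}$. The key combinatorial observation is that any walk of length $\ell+1$ from $v_i$ to $v_j$ decomposes uniquely as a walk of length $\ell$ from $v_i$ to some intermediate vertex $v_k$ (namely the vertex visited just before the final step), followed by a single edge from $v_k$ to $v_j$. By the induction hypothesis the number of walks of length $\ell$ from $v_i$ to $v_k$ is $(A^{\ell})_{ik}$, and the number of ways to extend by one edge to $v_j$ is $a_{kj}$; since the choice of $v_k$ ranges over all $m$ vertices and these cases are disjoint, the total number of walks of length $\ell+1$ from $v_i$ to $v_j$ is $\sum_{k=1}^{m} (A^{\ell})_{ik}\, a_{kj} = (A^{\ell+1})_{ij}$, completing the induction.

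There is no real obstacle here; the only point requiring a word of care is the bijective decomposition in the inductive step, i.e.\ checking that ``chop off the last edge'' is genuinely a bijection between walks of length $\ell+1$ from $v_i$ to $v_j$ and pairs consisting of a length-$\ell$ walk from $v_i$ to $v_k$ together with an edge $\{v_k, v_j\}$, ranging over all $k$. This is immediate from the definition of a walk as an alternating vertex-edge sequence, since the truncation and the extension are mutually inverse. We also note the statement (and proof) makes sense for multigraphs, since $a_{kj}$ counts edges with multiplicity and distinct parallel edges give distinct walks.
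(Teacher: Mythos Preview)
Your proof is correct and is exactly the standard induction argument found in Stanley's text. The paper itself does not supply a proof of this lemma at all; it simply cites \cite[Theorem~1.1]{RP.Stanley-AC} and moves on, so there is nothing further to compare.
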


A \emph{closed walk} in $G$ is a walk that ends where it begins. The number of closed walks in $G$ of length $\ell$ starting at $v_i$ is therefore given by $(A(G)^{\ell})_{ii}$.

\begin{cor}{\em (\cite[Chapter 1]{RP.Stanley-AC})}\label{Close-walk}
The total number $f_G(\ell)$ of closed walks of length $\ell$ is given by 
$$f_G(\ell)=\sum_{i=1}^m(A(G)^{\ell})_{ii}=\mathrm{tr}(A(G)^{\ell}),$$
where $\mathrm{tr}$ denotes trace (that is, the sum of main diagonal entries). 
\end{cor}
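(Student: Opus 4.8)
The statement to prove is Corollary~\ref{Close-walk}, so the plan is short and essentially a bookkeeping argument built on Lemma~\ref{WalksinG}. I would first recall that by Lemma~\ref{WalksinG}, for each fixed vertex $v_i$ the diagonal entry $(A(G)^\ell)_{ii}$ counts exactly the walks of length $\ell$ from $v_i$ back to $v_i$, i.e. the closed walks of length $\ell$ based at $v_i$; this is just the specialization $u=v=v_i$ of that lemma. The only genuine content is then to argue that summing over the base vertex correctly enumerates \emph{all} closed walks of length $\ell$.

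The key step is the partitioning observation: every closed walk of length $\ell$ has a well-defined starting vertex (the first vertex $v_1$ of its defining sequence, which by the closed condition equals the last vertex), and conversely a closed walk based at $v_i$ is in particular a closed walk. Hence the set of all closed walks of length $\ell$ is the disjoint union, over $i=1,\dots,m$, of the sets of closed walks of length $\ell$ based at $v_i$. Taking cardinalities and applying Lemma~\ref{WalksinG} to each piece gives
\begin{align*}
f_G(\ell)=\sum_{i=1}^m (A(G)^\ell)_{ii}.
\end{align*}
Finally I would note that $\sum_{i=1}^m (A(G)^\ell)_{ii}$ is by definition the trace $\mathrm{tr}(A(G)^\ell)$, which yields the stated identity.

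There is essentially no obstacle here; the ``hard part'' is merely being careful about the convention for what counts as the base point of a closed walk and observing that this gives a genuine partition (no double counting, no omissions) so that additivity of cardinality over a disjoint union applies. One might optionally remark that $f_G(\ell)$ can also be written spectrally as $\sum_{k=1}^m \lambda_k^\ell$ where $\lambda_1,\dots,\lambda_m$ are the eigenvalues of $A(G)$, since $\mathrm{tr}$ is similarity-invariant and $A(G)$ is real symmetric hence diagonalizable; but this is not needed for the statement as written.
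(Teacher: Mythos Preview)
Your argument is correct and is exactly the immediate deduction the paper has in mind: the corollary is stated without proof (it is quoted from \cite[Chapter~1]{RP.Stanley-AC}) as a direct consequence of Lemma~\ref{WalksinG}, and your partition-by-base-vertex plus trace observation is precisely that deduction. There is nothing to add.
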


\section{Proofs of The Main Theorems}

Given a hexagon $H$ with side length $n$, the six corners of the hexagon $H$ are denoted by $1,2,3,4,5,6$; see Figure \ref{Tu4}.
By observing the corona of a hexagon $H$, we found that there are five states for lozenges at one corner of a hexagon $H$. For example, the five states at corner $1$ are shown in Figure \ref{Tu5}. The number of lozenges is 3, 3, 4, 2, and 3, respectively. The states of the remaining 5 corners are obtained by rotating the states at corner 1. For instance, the five states at corner $2$ are shown in Figure \ref{Tu6}.

\begin{figure}[htp]
\centering
\includegraphics[width=4cm,height=3.5cm]{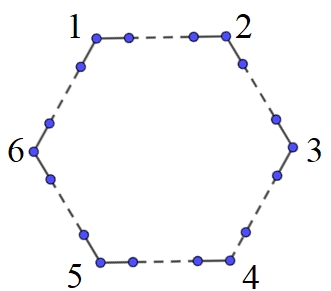}
\caption{A hexagon $H$.}
\label{Tu4}
\end{figure}

\begin{figure}[htp]
\centering
\includegraphics[width=16cm,height=4cm]{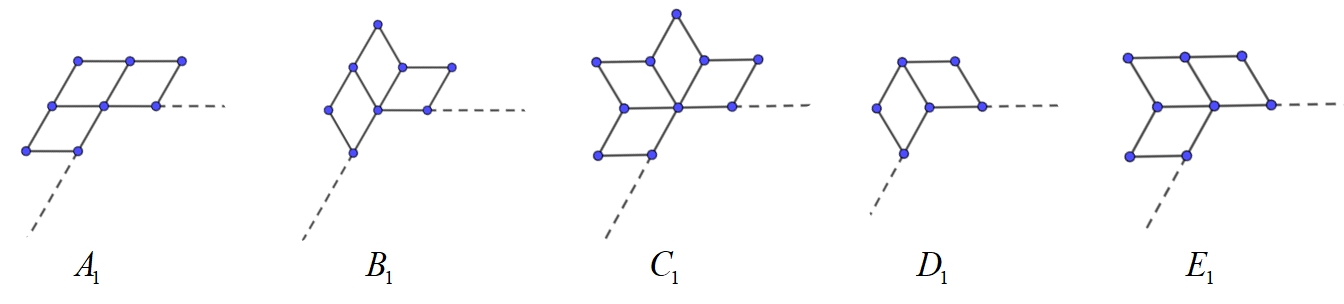}
\caption{The five states at corner $1$.}
\label{Tu5}
\end{figure}

\begin{figure}[htp]
\centering
\includegraphics[width=16cm,height=4cm]{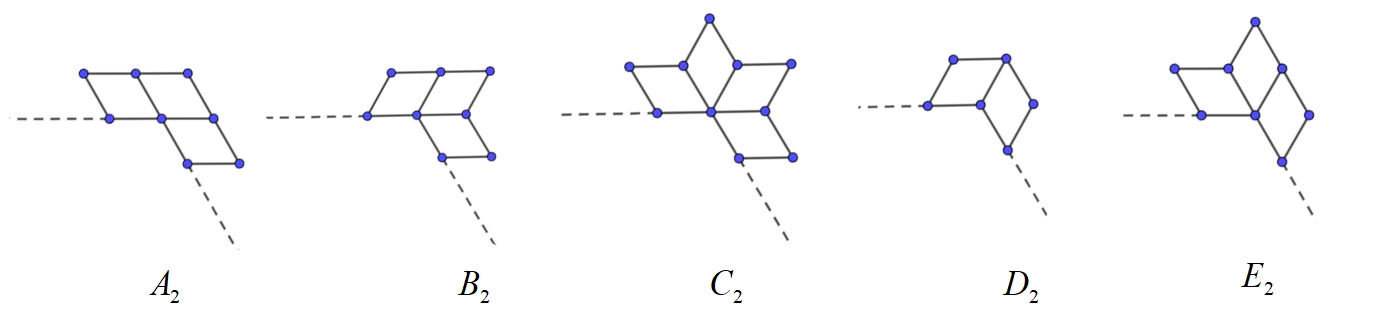}
\caption{The five states at corner $2$.}
\label{Tu6}
\end{figure}

For a corona of hexagon $H$ with side length $n$, there are $n+1$ states for lozenges between corner $1$ and corner $2$, see Figure \ref{Tu7}. The states of the other edges of the hexagon $H$ are obtained by rotating these $n+1$ states. In Figure \ref{Tu7}, states $Q$ and $K$ have $n-2$ lozenges, while state $L_i, 1\leq i\leq n-1$ has $n-1$ lozenges.
\begin{figure}[htp]
\centering
\includegraphics[width=16cm,height=9cm]{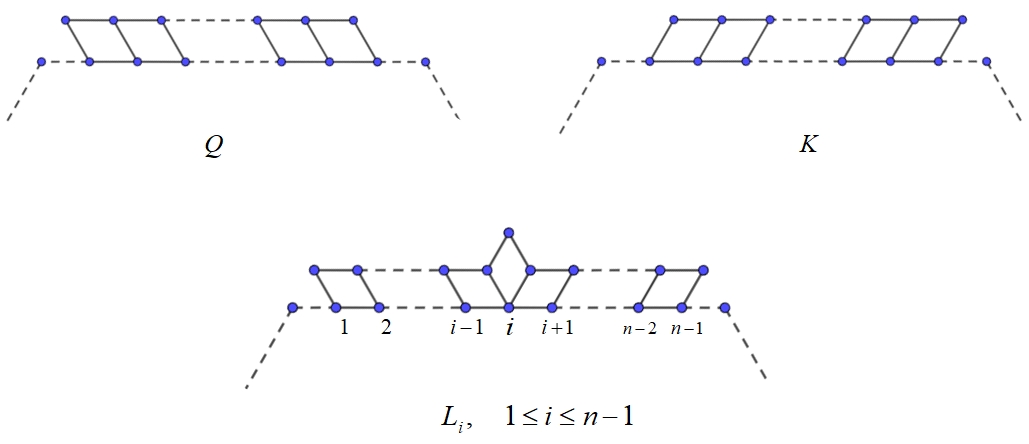}
\caption{The $n+1$ states on the side.}
\label{Tu7}
\end{figure}

Now we present our proof of Theorem \ref{Them-A380346}.
\begin{proof}[Proof of Theorem \ref{Them-A380346}]
Based on the above statement, we know that corner 1 and corner 2 are connected by the $n+1$ states in Figure \ref{Tu7}. Therefore, we can construct the following bipartite graph in Figure \ref{Tu8}. The two vertices in the graph are connected, indicating that corner 1 and corner 2 can be concatenated together through the states in Figure \ref{Tu7}. For example, the state $D_1$ can be connected to states $A_2$, $C_2$, and $E_2$ through state $Q$, and the state $D_1$ can be connected to states $B_2$ and $D_2$ through states $L_i$, $1\leq i\leq n-1$. (More precisely, there are $n-1$ multi-edges between $D_1$ and $B_2$, and also between $D_1$ and $D_2$).
\begin{figure}[htp]
\centering
\includegraphics[width=8cm,height=9cm]{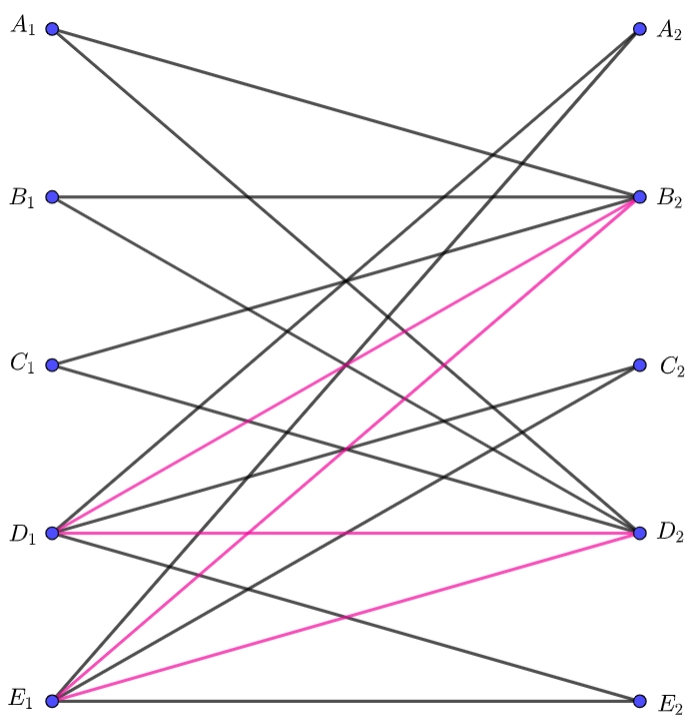}
\caption{The bipartite graph in proof of Theorem \ref{Them-A380346}.}
\label{Tu8}
\end{figure}

Now we construct the following weighted adjacency matrix $M$ of the graph in Figure \ref{Tu8}:
$$M=\begin{array}{@{}r@{}c@{}c@{}c@{}c@{}c@{}l@{}}
& A_2 & B_2 & C_2 & D_2 & E_2 \\
\left.\begin{array}{c} A_1 \\ B_1 \\ C_1 \\ D_1 \\ E_1 \end{array}\right(
& \begin{array}{c} 0 \\ 0 \\ 0 \\ x^{3+n-2} \\ x^{3+n-2}   \end{array}
& \begin{array}{c} x^{3+n-2} \\ x^{3+n-2} \\ x^{3+n-2} \\ (n-1)x^{3+n-1} \\ (n-1)x^{3+n-1} \end{array}
& \begin{array}{c}0 \\ 0 \\ 0 \\x^{4+n-2} \\ x^{4+n-2}  \end{array}
& \begin{array}{c} x^{2+n-2} \\ x^{2+n-2} \\ x^{2+n-2} \\ (n-1)x^{2+n-1} \\ (n-1)x^{2+n-1} \end{array}
& \begin{array}{c} 0 \\ 0 \\ 0 \\ x^{3+n-2} \\ x^{3+n-2} \end{array}
& \left).\begin{array}{c} \\ \\ \\ \\ \\  \end{array}\right.
\end{array},$$
where the exponent $r$ of weight $x^r$ is the number of lozenges in corner 2 and one of the edges in Figure \ref{Tu7}.
According to Corollary \ref{Close-walk}, it is clear that we only need to get the sum of the main diagonal elements of matrix $M^6$.

Through tedious matrix multiplication, it follows that 
\begin{align*}
\sum_{i=1}^5(M^{6})_{ii}=\mathrm{tr}(M^{6})=2x^{6n+3}+9(n+1)^2x^{6n+4}+6(n+1)^4x^{6n+5}+(n+1)^6x^{6n+6}.
\end{align*}
This completes the proof of Theorem \ref{Them-A380346}.
\end{proof}

We assume that readers are familiar with the relevant knowledge of generating functions (see \cite[Chapter 1]{RP.Stanley}).
\begin{cor}
Let $n\in \mathbb{N}$. Let $H(n)$ be the number of coronas of a hexagon $H$ with side length $n$. Then we obtain the generating function of $H(n)$ as follows:
$$\sum_{n\geq 0}H(n)x^n=\frac{2x^6+4x^5+114x^4+220x^3+290x^2+72x+18}{(1-x)^7}.$$
\end{cor}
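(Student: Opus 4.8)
The goal is to derive the closed form for $\sum_{n\ge0} H(n)x^n$ from the polynomial formula
$$H(n)=n^6+6n^5+21n^4+44n^3+60n^2+48n+18$$
already established in Theorem \ref{Them-A380346}. The plan is to expand each monomial $n^k$ in the Eulerian-polynomial basis, i.e. to use the standard identity $\sum_{n\ge0}n^k x^n = A_k(x)/(1-x)^{k+1}$, where $A_k(x)$ is the $k$-th Eulerian polynomial, and then collect everything over the common denominator $(1-x)^7$.

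Concretely, I would first recall (or reprove in one line via repeated application of the operator $x\frac{d}{dx}$ to $\frac{1}{1-x}$) the following short table:
\begin{align*}
\sum_{n\ge0}x^n&=\frac{1}{1-x}, &
\sum_{n\ge0}n x^n&=\frac{x}{(1-x)^2}, &
\sum_{n\ge0}n^2 x^n&=\frac{x+x^2}{(1-x)^3},\\
\sum_{n\ge0}n^3 x^n&=\frac{x+4x^2+x^3}{(1-x)^4}, &
\sum_{n\ge0}n^4 x^n&=\frac{x+11x^2+11x^3+x^4}{(1-x)^5},\\
\sum_{n\ge0}n^5 x^n&=\frac{x+26x^2+66x^3+26x^4+x^5}{(1-x)^6}, &
\sum_{n\ge0}n^6 x^n&=\frac{x+57x^2+302x^3+302x^4+57x^5+x^6}{(1-x)^7}.
\end{align*}
Then I multiply the $k$-th identity by $(1-x)^{6-k}$ to bring every term to denominator $(1-x)^7$, multiply by the corresponding coefficient from $H(n)$ ($1,6,21,44,60,48,18$ for $k=6,5,4,3,2,1,0$), and sum the resulting numerators. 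The claimed numerator $2x^6+4x^5+114x^4+220x^3+290x^2+72x+18$ should drop out after collecting like powers of $x$.

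The only real "obstacle" is bookkeeping: one must be careful with the factors $(1-x)^{6-k}$, since expanding $(1-x)^5$, $(1-x)^4$, etc. and multiplying through is where sign and binomial-coefficient errors creep in. A good sanity check, which I would include, is to verify the identity by plugging in small values: the constant term of the numerator must equal $H(0)=18$, and comparing Taylor coefficients up to $x^3$ against $H(1)=198$, $H(2)=\,$(evaluate the polynomial), $H(3)=\,$(evaluate) pins down any arithmetic slip. An alternative, essentially equivalent route that avoids the Eulerian table is to note that since $H(n)$ is a polynomial of degree $6$, the generating function is automatically of the form $P(x)/(1-x)^7$ with $\deg P\le 6$; one then determines the seven coefficients of $P$ by matching $H(0),\dots,H(6)$, i.e. by solving a triangular linear system coming from $P(x)=(1-x)^7\sum_{n\ge0}H(n)x^n$ truncated at degree $6$. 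Either way the computation is routine and the statement follows.
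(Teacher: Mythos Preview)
Your proposal is correct. The paper itself gives no proof of this corollary at all: it simply states the generating function immediately after Theorem~\ref{Them-A380346}, prefaced only by the remark that readers are assumed familiar with generating functions. So you are supplying details the authors deliberately omit, and both routes you sketch (Eulerian polynomials, or the observation that a degree-$6$ polynomial sequence has generating function $P(x)/(1-x)^7$ with $P$ determined by $H(0),\dots,H(6)$) are standard and adequate. Your sanity check via $H(0)=18$ and $H(1)=198$ is exactly the right safeguard against arithmetic slips; the computation is indeed routine.
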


\begin{figure}[htp]
\centering
\includegraphics[width=3.5cm,height=4cm]{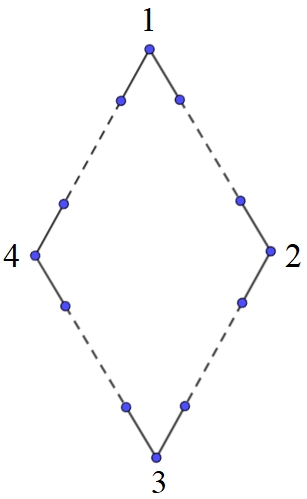}
\caption{A diamond $D$.}
\label{Tu9}
\end{figure}

Now we consider the corona of a diamond $D$. Given a diamond $D$ with side length $n$, the four corners of the diamond $D$ are denoted by $1,2,3,4$, see Figure \ref{Tu9}. By observing the corona of a diamond $D$, we found that there are 8 states for lozenges at the corner 1; see Figure \ref{Tu10}. There are 5 states for lozenges at the corner 2; see Figure \ref{Tu11}. 
The states of corner 3 and corner 4 are obtained by rotating $180^\circ$ of the states at corners 1 and 2, respectively. Similar to Figure \ref{Tu10}, the states for lozenges at corner 3 are denoted by $A_3,B_3,C_3,D_3,E_3,F_3,I_3,J_3$.
For a corona of diamond $D$ with side length $n$, which is the same as Figure \ref{Tu7}, there are $n+1$ states for lozenges between corner $1$ and corner $2$.

\begin{figure}[htp]
\centering
\includegraphics[width=14cm,height=7cm]{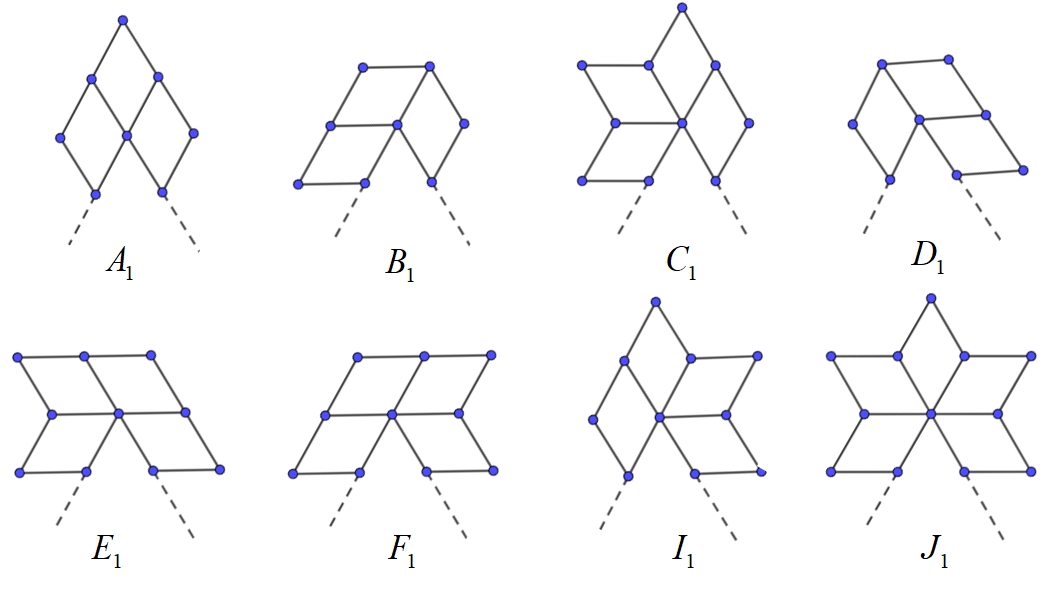}
\caption{The eight states at corner $1$.}
\label{Tu10}
\end{figure}

\begin{figure}[htp]
\centering
\includegraphics[width=15cm,height=4cm]{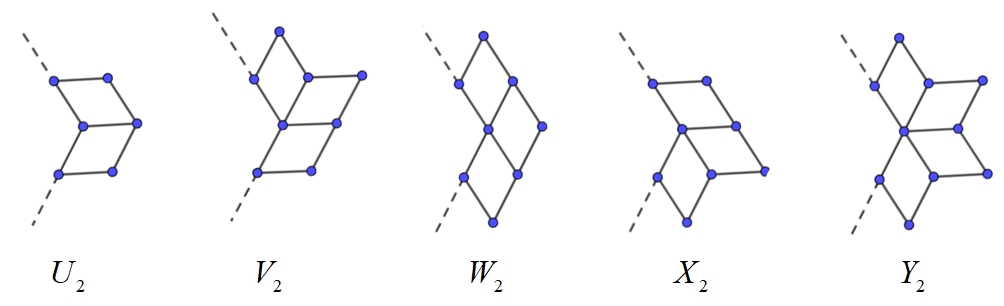}
\caption{The five states at corner $2$.}
\label{Tu11}
\end{figure}

Now we present our proof of Theorem \ref{Them-A380416}.
\begin{proof}[Proof of Theorem \ref{Them-A380416}]
Similar to the proof of Theorem \ref{Them-A380346}, we can construct the graph in Figure \ref{Tu12}.
The two vertices in the graph are connected, indicating that corner 1 and corner 2 (or corner 2 and corner 3) can be concatenated together through the states in Figure \ref{Tu7}.
For example, the state $A_1$ can be connected to states $V_2$, $W_2$, and $Y_2$ through state $Q$, and the state $A_1$ can be connected to states $U_2$ and $X_2$ through states $L_i$, $1\leq i\leq n-1$. (More precisely, there are $n-1$ multi-edges between $A_1$ and $U_2$, and also between $A_1$ and $X_2$).
\begin{figure}[htp]
\centering
\includegraphics[width=15cm,height=13cm]{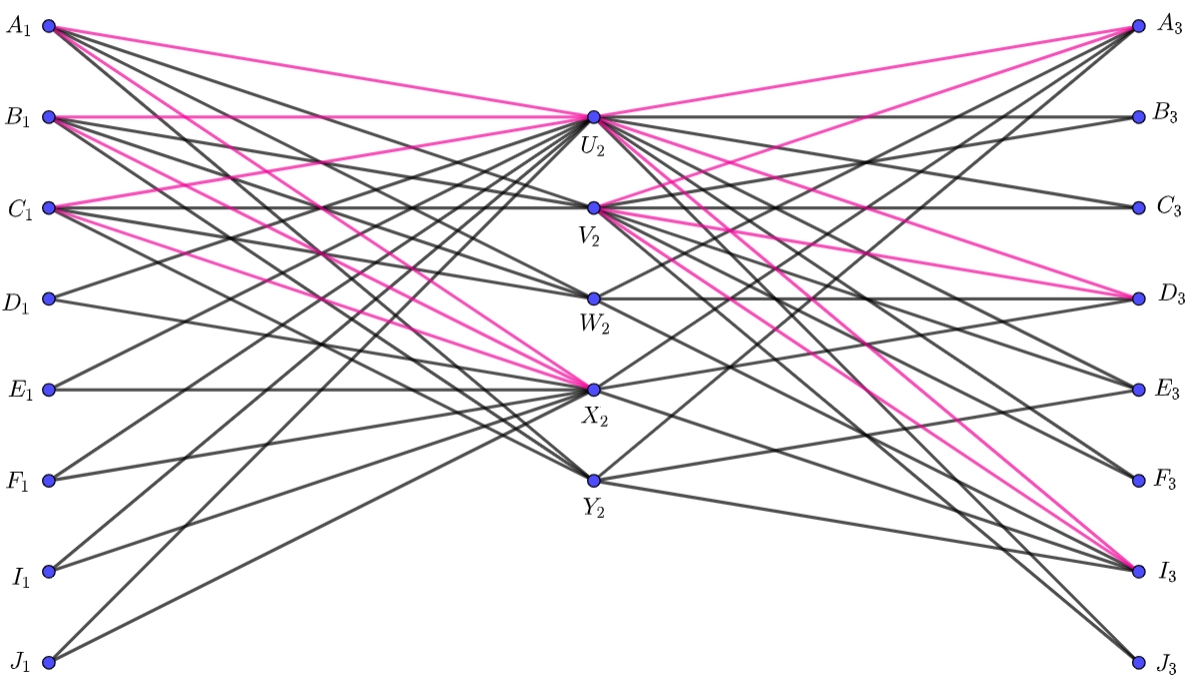}
\caption{The graph in proof of Theorem \ref{Them-A380416}.}
\label{Tu12}
\end{figure}

Now we construct the following two weighted adjacency matrices $R$ and $T$:
$$R=\begin{array}{@{}r@{}c@{}c@{}c@{}c@{}c@{}l@{}}
& U_2 & V_2 & W_2 & X_2 & Y_2 \\
\left.\begin{array}{c} A_1 \\ B_1 \\ C_1 \\ D_1 \\ E_1 \\ F_1\\ I_1\\ J_1 \end{array}\right(
& \begin{array}{c} (n-1)x^{2+n-1} \\ (n-1)x^{2+n-1} \\ (n-1)x^{2+n-1} \\ x^{2+n-2} \\ x^{2+n-2} \\ x^{2+n-2}\\ x^{2+n-2} \\ x^{2+n-2} \end{array}
& \begin{array}{c} x^{3+n-2} \\ x^{3+n-2} \\ x^{3+n-2} \\ 0\\ 0 \\ 0 \\ 0 \\ 0 \end{array}
& \begin{array}{c} x^{3+n-2} \\ x^{3+n-2} \\ x^{3+n-2} \\ 0\\ 0 \\ 0 \\ 0 \\ 0 \end{array}
& \begin{array}{c} (n-1)x^{3+n-1} \\ (n-1)x^{3+n-1} \\ (n-1)x^{3+n-1} \\ x^{3+n-2}\\ x^{3+n-2}\\ x^{3+n-2}\\ x^{3+n-2}\\ x^{3+n-2} \end{array}
& \begin{array}{c} x^{4+n-2} \\ x^{4+n-2} \\ x^{4+n-2} \\ 0 \\ 0 \\ 0 \\ 0 \\ 0 \end{array}
& \left).\begin{array}{c} \\ \\ \\ \\ \\ \\ \\  \end{array}\right.
\end{array},$$
and
$$T=\begin{array}{@{}r@{}c@{}c@{}c@{}c@{}c@{}c@{}c@{}c@{}l@{}}
& A_3 & B_3 & C_3 & D_3 & E_3 & F_3 & I_3 & J_3 \\
\left.\begin{array}{c} U_2 \\ V_2 \\ W_2 \\ X_2 \\ Y_2 \end{array}\right(
& \begin{array}{c} (n-1)x^{n+2} \\  (n-1)x^{n+2} \\ x^{n+1} \\ x^{n+1} \\ x^{n+1}   \end{array}
& \begin{array}{c} x^{n+1} \\  x^{n+1} \\ 0 \\ 0 \\ 0 \end{array}
& \begin{array}{c} x^{n+2} \\ x^{n+2} \\ 0 \\ 0 \\ 0  \end{array}
& \begin{array}{c} (n-1)x^{n+2} \\ (n-1)x^{n+2} \\ x^{n+1} \\ x^{n+1} \\ x^{n+1} \end{array}
& \begin{array}{c} x^{n+2} \\ x^{n+2} \\ 0 \\ 0 \\ 0 \end{array}
& \begin{array}{c} x^{n+2} \\ x^{n+2} \\ 0 \\ 0 \\ 0 \end{array}
& \begin{array}{c} (n-1)x^{n+3} \\ (n-1)x^{n+3} \\ x^{n+2} \\ x^{n+2} \\ x^{n+2} \end{array}
& \begin{array}{c} x^{n+3} \\ x^{n+3} \\ 0 \\ 0 \\ 0 \end{array}
& \left).\begin{array}{c} \\ \\ \\ \\ \\  \end{array}\right.
\end{array}$$
According to Corollary \ref{Close-walk}, it is clear that we only need to get the sum of the main diagonal elements of matrix $(R\cdot T)^2$.

Through tedious matrix multiplication, we obtain 
\begin{align*}
\sum_{i=1}^8((R\cdot T)^2)_{ii}&=\mathrm{tr}((R\cdot T)^2)
\\&=2x^{4n+3}+(2n+3)^2x^{4n+4}+2(n+1)^2(2n+3)x^{4n+5}+(n+1)^4x^{4n+6}.
\end{align*}
The proof is completed.
\end{proof}

\begin{cor}
Let $n\in \mathbb{N}$. Let $D(n)$ be the number of coronas of a diamond $D$ with side length $n$. Then we obtain the generating function of $D(n)$ as follows:
$$\sum_{n\geq 0}D(n)x^n=\frac{3x^4-13x^3+23x^2-7x+18}{(1-x)^5}.$$
\end{cor}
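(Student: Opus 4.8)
The plan is to read off the generating function directly from the closed form $D(n) = n^4 + 8n^3 + 24n^2 + 32n + 18$ established in Theorem~\ref{Them-A380416}. Since $D(n)$ is a polynomial in $n$ of degree $4$, standard generating function theory (see \cite[Chapter 1]{RP.Stanley}) guarantees that $\sum_{n\geq 0} D(n) x^n$ is a rational function of the form $P(x)/(1-x)^5$ with $\deg P \leq 4$, so all that remains is to identify the numerator $P$.

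One clean route is the binomial-basis expansion: write $D(n) = \sum_{j=0}^{4} c_j \binom{n+j}{j}$ for suitable constants $c_j$, and combine this with $\sum_{n\geq 0}\binom{n+j}{j} x^n = (1-x)^{-(j+1)}$ to get $\sum_{n\geq 0} D(n) x^n = \sum_{j=0}^4 c_j (1-x)^{-(j+1)}$; clearing denominators then yields $P(x) = \sum_{j=0}^4 c_j (1-x)^{4-j}$. I would instead present the equivalent but slightly more mechanical route of extracting the coefficients of $P(x) = (1-x)^5 \sum_{n\geq 0} D(n) x^n$ one at a time. Expanding $(1-x)^5 = \sum_{j=0}^{5} (-1)^j \binom{5}{j} x^j$, the coefficient of $x^k$ in $P(x)$ equals $\sum_{j=0}^{\min(k,5)} (-1)^j \binom{5}{j} D(k-j)$; for $k \geq 5$ (reading $D$ as its defining degree-$4$ polynomial) this is the fifth finite difference of a degree-$4$ polynomial, hence $0$, which re-confirms $\deg P \leq 4$. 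The five remaining coefficients are then determined by the values $D(0)=18$, $D(1)=83$, $D(2)=258$, $D(3)=627$, $D(4)=1298$, and the successive alternating partial sums give $[x^0]P = 18$, $[x^1]P = -7$, $[x^2]P = 23$, $[x^3]P = -13$, $[x^4]P = 3$, so that $P(x) = 3x^4 - 13x^3 + 23x^2 - 7x + 18$, as claimed.

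There is no real obstacle here, since the statement is an immediate corollary of Theorem~\ref{Them-A380416}; the only point requiring a modicum of care is the bookkeeping in the finite-difference computation and the observation that $\deg D(n) = 4$ forces $\deg P \leq 4$, so that the five initial values $D(0),\dots,D(4)$ suffice to pin down $P$ completely.
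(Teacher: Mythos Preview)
Your proof is correct. The paper does not give a separate proof for this corollary; it states it immediately after Theorem~\ref{Them-A380416} as a direct consequence of the closed form $D(n)=n^4+8n^3+24n^2+32n+18$, and your derivation via $(1-x)^5\sum_{n\ge 0}D(n)x^n$ and finite differences is exactly the standard computation that makes this implicit step explicit.
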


\section{An Extension for the Corona}

Now we consider an extension for the corona. In fact, we change the side lengths of regular hexagon $H$ or diamond $D$, but ensure that the interior angles remain unchanged. More precisely, the \emph{hexagon} $\overline{H}$ is defined by the left of Figure \ref{Tu13}, i.e., its three pairs of opposite sides have lengths $n_1,n_2,n_3\in \mathbb{N}$, respectively, and all interior angles are $120^\circ$. The \emph{diamond} $\overline{D}$ is defined by the right of Figure \ref{Tu13}, i.e., its two pairs of opposite sides have lengths $n_1,n_2\in \mathbb{N}$, respectively, and the interior angles are $60^\circ$, $120^\circ$, $60^\circ$, and $120^\circ$.

\begin{figure}[htp]
\centering
\includegraphics[width=11cm,height=6cm]{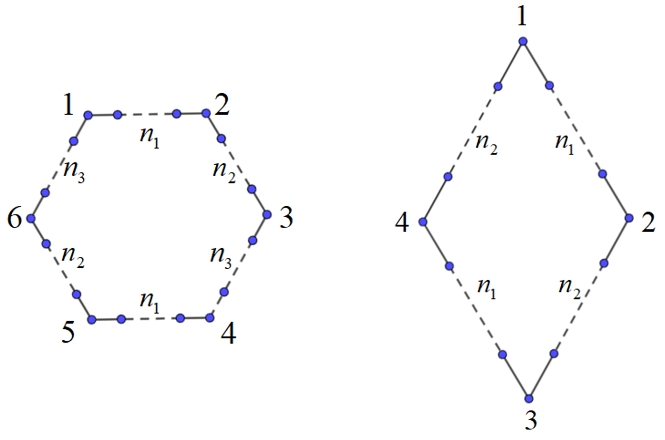}
\caption{A hexagon $\overline{H}$ and a diamond $\overline{D}$.}
\label{Tu13}
\end{figure}

Given a hexagon $\overline{H}$, a \emph{corona of a hexagon $\overline{H}$} is a lozenge tiling along the edges of $\overline{H}$ such that no additional lozenges are utilized. 
The number of coronas of a hexagon $\overline{H}$ is denoted by $\overline{H}(n_1,n_2,n_3)$.

\begin{thm}\label{ExtensionOne}
Let $n_1,n_2,n_3\in \mathbb{N}$. Let $\overline{H}(n_1,n_2,n_3)$ be the number of coronas of a hexagon $\overline{H}$.
Then there are only four cases in which the number of lozenges is used in a corona of a hexagon $\overline{H}$, namely $2(n_1+n_2+n_3)+3$, $2(n_1+n_2+n_3)+4$, $2(n_1+n_2+n_3)+5$, and $2(n_1+n_2+n_3)+6$.
Let $\overline{h}_i(n)$ be the number of corona tilings for $2(n_1+n_2+n_3)+2+i$ with $1\leq i\leq 4$. Then we have
\begin{align*}
&\overline{h}_1(n)=2,\ \ &&\overline{h}_3(n)=2(n_1+1)(n_2+1)(n_3+1)(n_1+n_2+n_3+3),
\\&\overline{h}_2(n)=(n_1+n_2+n_3+3)^2,\ \ &&\overline{h}_4(n)=(n_1+1)^2(n_2+1)^2(n_3+1)^2.
\end{align*}
Furthermore, we obtain
\begin{align*}
\overline{H}(n_1,n_2,n_3)=\sum_{i=1}^4\overline{h}_i(n)=((n_1+1)(n_2+1)(n_3+1)+(n_1+n_2+n_3+3))^2+2.
\end{align*}
\end{thm}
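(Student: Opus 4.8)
The plan is to generalize the walk-counting argument used for Theorem \ref{Them-A380346} to the hexagon $\overline{H}$ with three independent side lengths $n_1,n_2,n_3$. First I would classify, as before, the five local states at each of the six corners (the rotated copies of the five states in Figure \ref{Tu5}), and the states that can appear along each of the six edges. Along an edge of length $n_k$ there are $n_k+1$ side states: two ``corner-friendly'' states (the analogues of $Q$ and $K$) using $n_k-2$ lozenges, and $n_k-1$ states (the analogues of $L_i$) using $n_k-1$ lozenges. The key structural point is that whether a given pair of corner states at the two ends of an edge is compatible, and via which side states, depends only on the \emph{types} of the corner states, not on $k$; only the lozenge counts — hence the exponent of $x$ and the multiplicity $n_k-1$ — depend on $k$. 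So the edge between corner $j$ and corner $j+1$ contributes a $5\times 5$ weighted transfer matrix $M_k$ of exactly the same shape as the matrix $M$ in the proof of Theorem \ref{Them-A380346}, but with $n$ replaced by $n_k$ in the exponents and multiplicities, and with the constant offsets in the exponents adjusted to record the lozenge count at the shared corner correctly.

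Next, by Corollary \ref{Close-walk} (or rather its directed/cyclic analogue: a corona corresponds to a choice of one state at each corner together with a compatible state on each edge, i.e.\ a closed walk of length $6$ in the cyclic concatenation), the weighted count of coronas is $\mathrm{tr}(M_1 M_2 M_3 M_1 M_2 M_3)$, viewed as a polynomial in $x$ whose coefficients are the numbers $\overline{h}_i(n)$. Here I should be slightly careful about the order: the six edges around $\overline{H}$ have lengths $n_1,n_2,n_3,n_1,n_2,n_3$ in cyclic order (opposite sides equal), so the relevant product is indeed $(M_1M_2M_3)^2$ up to cyclic relabeling, and the trace is independent of where one starts the cycle. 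I would then compute this trace. Because each $M_k$ has the same block structure as $M$ — three ``rigid'' corner-state rows that are identical to each other, and two ``flexible'' rows that are identical to each other and carry the factor $n_k-1$ — the multiplication collapses dramatically: effectively one is tracking a $2\times 2$ (rigid vs.\ flexible) reduced system, and the trace becomes a manageable symmetric expression in $n_1,n_2,n_3$.

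Carrying out this (tedious but routine) computation should yield
\begin{align*}
\mathrm{tr}\big((M_1M_2M_3)^2\big)
&= 2x^{2(n_1+n_2+n_3)+3} + (n_1+n_2+n_3+3)^2 x^{2(n_1+n_2+n_3)+4} \\
&\quad + 2(n_1+1)(n_2+1)(n_3+1)(n_1+n_2+n_3+3)\, x^{2(n_1+n_2+n_3)+5} \\
&\quad + (n_1+1)^2(n_2+1)^2(n_3+1)^2\, x^{2(n_1+n_2+n_3)+6},
\end{align*}
which reads off the four claimed formulas for $\overline{h}_i(n)$; summing the four coefficients and factoring gives $\overline{H}(n_1,n_2,n_3)=\big((n_1+1)(n_2+1)(n_3+1)+(n_1+n_2+n_3+3)\big)^2+2$, and specializing $n_1=n_2=n_3=n$ recovers Theorem \ref{Them-A380346}. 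Two sanity checks I would include: the $x$-exponents confirm that exactly $2(n_1+n_2+n_3)+3,\dots,+6$ lozenges can occur, matching the first assertion of the theorem; and the $n_1=n_2=n_3=n$ case must reproduce $h_1,\dots,h_4$ from Theorem \ref{Them-A380346}.

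\textbf{Main obstacle.} The conceptual content is light; the real work — and the place an error is most likely to creep in — is bookkeeping. One must (i) enumerate the edge states for general $n_k$ and verify that the compatibility pattern between corner-state types is genuinely $k$-independent (so that the five rotated corner states behave uniformly around a hexagon whose edges now have unequal lengths), and (ii) get every exponent offset right, since the weight $x^r$ must count lozenges without double-counting those shared between a corner and an adjacent edge, and the convention must be consistent all the way around the cycle so that the total exponent comes out to $2(n_1+n_2+n_3)+2+i$. Once the three matrices $M_1,M_2,M_3$ are written down correctly, evaluating $\mathrm{tr}((M_1M_2M_3)^2)$ is mechanical — exploiting the rigid/flexible block collapse keeps it short — and the final factorization is a routine algebraic identity.
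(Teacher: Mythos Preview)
Your proposal is correct and follows essentially the same approach as the paper: define the edge-transfer matrix $M_k$ by replacing $n$ with $n_k$ in the matrix $M$ from the proof of Theorem~\ref{Them-A380346}, and then compute $\mathrm{tr}\big((M_1M_2M_3)^2\big)$ using Corollary~\ref{Close-walk}. If anything, your write-up is more careful than the paper's own proof, which simply records $M(n_i)$ and the resulting trace without your explicit justification that the compatibility pattern is $k$-independent or your observation about the rigid/flexible block collapse.
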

\begin{proof}
Similar to the proof of Theorem \ref{Them-A380346}, we define the following weighted adjacency matrix $M(n_i)$:
\begin{align*}
M(n_i)
=\left ( \begin{matrix}
0 & x^{3+n_i-2} & 0 & x^{2+n_i-2} & 0\\
0 & x^{3+n_i-2} & 0 & x^{2+n_i-2} & 0\\
0 & x^{3+n_i-2} & 0 & x^{2+n_i-2} & 0\\ 
x^{3+n_i-2} & (n_i-1)x^{3+n_i-1} & x^{4+n_i-2} & (n_i-1)x^{2+n_i-1} & x^{3+n_i-2} \\ 
x^{3+n_i-2}& (n_i-1)x^{3+n_i-1} & x^{4+n_i-2} & (n_i-1)x^{2+n_i-1} & x^{3+n_i-2}\\
\end{matrix} \right ).
\end{align*}
According to Corollary \ref{Close-walk}, it is clear that we only need to get the sum of the main diagonal elements of matrix $(M(n_1)\cdot M(n_2)\cdot M(n_3))^2$. Through matrix multiplication, it follows that 
\begin{align*}
\mathrm{tr}((M(n_1)\cdot M(n_2)\cdot M(n_3))^2)&=2x^{2(n_1+n_2+n_3)+3}+(n_1+n_2+n_3+3)^2x^{2(n_1+n_2+n_3)+4}
\\&\ +2(n_1+1)(n_2+1)(n_3+1)(n_1+n_2+n_3+3)x^{2(n_1+n_2+n_3)+5}
\\&\ +(n_1+1)^2(n_2+1)^2(n_3+1)^2x^{2(n_1+n_2+n_3)+6}.
\end{align*}
This completes the proof.
\end{proof}

Given a diamond $\overline{D}$, a \emph{corona of a diamond $\overline{D}$} is a lozenge tiling along the edges of $\overline{D}$ such that no additional lozenges are utilized. The number of coronas of a diamond $\overline{D}$ is denoted by $\overline{D}(n_1,n_2)$.

\begin{thm}\label{ExtensionTwo}
Let $n_1,n_2\in \mathbb{N}$. Let $\overline{D}(n_1,n_2)$ be the number of coronas of a diamond $\overline{D}$.
Then there are only four cases in which the number of lozenges is used in a corona of a diamond $\overline{D}$, namely $2(n_1+n_2)+3$, $2(n_1+n_2)+4$, $2(n_1+n_2)+5$, and $2(n_1+n_2)+6$.
Let $\overline{d}_i(n)$ be the number of corona tilings for $2(n_1+n_2)+2+i$ with $1\leq i\leq 4$. Then we have
\begin{align*}
&\overline{d}_1(n)=2,\ \ &&\overline{d}_3(n)=2(n_1+1)(n_2+1)(n_1+n_2+3),
\\&\overline{d}_2(n)=(n_1+n_2+3)^2,\ \ &&\overline{d}_4(n)=(n_1+1)^2(n_2+1)^2.
\end{align*}
Furthermore, we obtain
\begin{align*}
\overline{D}(n_1,n_2)=\sum_{i=1}^4\overline{d}_i(n)=((n_1+1)(n_2+1)+(n_1+n_2+3))^2+2.
\end{align*}
\end{thm}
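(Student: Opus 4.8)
The plan is to follow the proof of Theorem~\ref{Them-A380416} almost verbatim, the only change being that the single side length $n$ is replaced by the two side lengths $n_1$ and $n_2$, which enter the two weighted adjacency matrices separately.

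First I would record the combinatorial input. Deforming $D$ into $\overline{D}$ preserves all interior angles, so the local configuration at every corner is unchanged: the two $120^\circ$-type corners (corners $2$ and $4$) still admit the $5$ states $U_2,V_2,W_2,X_2,Y_2$ of Figure~\ref{Tu11}, and the two $60^\circ$-type corners (corners $1$ and $3$) still admit the $8$ states $A,B,C,D,E,F,I,J$ of Figure~\ref{Tu10}, with the states at corner~$3$ obtained from those at corner~$1$ by the $180^\circ$ rotation. What changes is the side data: the side between corners~$1$ and~$2$ has length $n_1$, hence carries $n_1+1$ states (the analogue of Figure~\ref{Tu7} with $n$ replaced by $n_1$, including $n_1-1$ multi-edges of type $L_i$), while the side between corners~$2$ and~$3$ has length $n_2$ and carries $n_2+1$ states. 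A corona of $\overline{D}$ is then exactly a closed walk $1\to2\to3\to4\to1$ in the layered graph analogous to Figure~\ref{Tu12}, with the total number of lozenges recorded as the exponent of $x$ in the walk's weight.

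Next I would write down the transfer matrices. Let $R(m)$ denote the $8\times5$ weighted adjacency matrix from the states of a $60^\circ$-type corner to the states of the next $120^\circ$-type corner across a side of length $m$; this is precisely the matrix $R$ from the proof of Theorem~\ref{Them-A380416} with every $n$ replaced by $m$. Likewise let $T(m)$ be the $5\times8$ matrix from a $120^\circ$-type corner to the next $60^\circ$-type corner across a side of length $m$, i.e.\ the matrix $T$ with $n$ replaced by $m$. Since $\overline{D}$ has a centre of symmetry, the $180^\circ$ rotation sends corner~$3$ to corner~$1$ and identifies the $3$-$4$ side (of length $n_1$, opposite the $1$-$2$ side) with the $1$-$2$ side, and the $4$-$1$ side (of length $n_2$) with the $2$-$3$ side; hence those two remaining sides again contribute $R(n_1)$ and $T(n_2)$. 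By Lemma~\ref{WalksinG} and Corollary~\ref{Close-walk} applied to this four-layer cyclic graph, the weighted enumerator of coronas of $\overline{D}$ equals
\begin{align*}
\mathrm{tr}\big(R(n_1)\,T(n_2)\,R(n_1)\,T(n_2)\big)=\mathrm{tr}\big((R(n_1)\,T(n_2))^{2}\big).
\end{align*}

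Finally, carrying out the (tedious but routine) product of the $8\times8$ matrix $R(n_1)\,T(n_2)$ with itself and taking the trace should yield
\begin{align*}
\mathrm{tr}\big((R(n_1)\,T(n_2))^{2}\big)={}&2x^{2(n_1+n_2)+3}+(n_1+n_2+3)^{2}x^{2(n_1+n_2)+4}\\
&+2(n_1+1)(n_2+1)(n_1+n_2+3)\,x^{2(n_1+n_2)+5}\\
&+(n_1+1)^{2}(n_2+1)^{2}\,x^{2(n_1+n_2)+6}.
\end{align*}
Reading off the coefficients gives $\overline{d}_1,\dots,\overline{d}_4$, and writing $a=(n_1+1)(n_2+1)$ and $b=n_1+n_2+3$ one finds $\overline{D}(n_1,n_2)=\overline{d}_4+\overline{d}_3+\overline{d}_2+\overline{d}_1=a^{2}+2ab+b^{2}+2=(a+b)^{2}+2$, the claimed formula; specializing $n_1=n_2=n$ recovers Theorem~\ref{Them-A380416}. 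I expect the main obstacle to be purely organizational: one must fix the convention for which corner and which side each weight exponent accounts for, so that $R(n_1)$ and $T(n_2)$ really are the $n\mapsto n_i$ specializations of $R$ and $T$ (and not twisted variants), and then push the $8\times8$ symbolic matrix multiplication through without error; verifying that no new corner or side states appear for general $n_1,n_2$ is immediate from the angle-preserving nature of the deformation.
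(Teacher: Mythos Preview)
Your proposal is correct and essentially identical to the paper's own proof: the paper also defines the two weighted adjacency matrices $\overline{R}$ and $\overline{T}$ as the $n\mapsto n_1$ and $n\mapsto n_2$ specializations of $R$ and $T$ from Theorem~\ref{Them-A380416}, and then computes $\mathrm{tr}((\overline{R}\cdot\overline{T})^2)$ to obtain the stated generating polynomial. Your added remark that $\overline{d}_1+\overline{d}_2+\overline{d}_3+\overline{d}_4=(a+b)^2+2$ with $a=(n_1+1)(n_2+1)$ and $b=n_1+n_2+3$ is a nice way to verify the closed form for $\overline{D}(n_1,n_2)$.
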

\begin{proof}
Similar to the proof of Theorem \ref{Them-A380416}, we define the following two weighted adjacency matrices:
\begin{align*}
\overline{R}
=\left ( \begin{matrix}
(n_1-1)x^{2+n_1-1} & x^{3+n_1-2} & x^{3+n_1-2} & (n_1-1)x^{3+n_1-1} & x^{4+n_1-2}\\
(n_1-1)x^{2+n_1-1} & x^{3+n_1-2} & x^{3+n_1-2} & (n_1-1)x^{3+n_1-1} & x^{4+n_1-2}\\
(n_1-1)x^{2+n_1-1} & x^{3+n_1-2} & x^{3+n_1-2} & (n_1-1)x^{3+n_1-1} & x^{4+n_1-2}\\ 
x^{2+n_1-2} & 0 & 0 & x^{3+n_1-2} & 0 \\ 
x^{2+n_1-2} & 0 & 0 & x^{3+n_1-2} & 0 \\
x^{2+n_1-2} & 0 & 0 & x^{3+n_1-2} & 0 \\
x^{2+n_1-2} & 0 & 0 & x^{3+n_1-2} & 0 \\
x^{2+n_1-2} & 0 & 0 & x^{3+n_1-2} & 0 \\
\end{matrix} \right ),
\end{align*}
and
\begin{align*}
\overline{R}
=\left ( \begin{matrix}
(n_2-1)x^{n_2+2} & x^{n_2+1} & x^{n_2+2} & (n_2-1)x^{n_2+2} & x^{n_2+2} & x^{n_2+2} & (n_2-1)x^{n_2+3} & x^{n_2+3} \\
(n_2-1)x^{n_2+2} & x^{n_2+1} & x^{n_2+2} & (n_2-1)x^{n_2+2} & x^{n_2+2} & x^{n_2+2} & (n_2-1)x^{n_2+3} &  x^{n_2+3} \\
x^{n_2+1} & 0 & 0 & x^{n_2+1} & 0 & 0 & x^{n_2+2} & 0 \\ 
x^{n_2+1} & 0 & 0 & x^{n_2+1} & 0 & 0 & x^{n_2+2} & 0 \\ 
x^{n_2+1} & 0 & 0 & x^{n_2+1} & 0 & 0 & x^{n_2+2} & 0 \\
\end{matrix} \right ).
\end{align*}
According to Corollary \ref{Close-walk}, it is clear that we only need to get the sum of the main diagonal elements of matrix $(\overline{R}\cdot \overline{T})^2$.
Through matrix multiplication, we obtain 
\begin{align*}
\mathrm{tr}((\overline{R}\cdot \overline{T})^2)&=2x^{2n_1+2n_2+3}+(n_1+n_2+3)^2x^{2n_1+2n_2+4}
\\&\ +2(n_1+1)(n_2+1)(n_1+n_2+3)x^{2n_1+2n_2+5}+(n_1+1)^2(n_2+1)^2x^{2n_1+2n_2+6}.
\end{align*}
This completes the proof.
\end{proof}






\noindent
{\small \textbf{Acknowledgements:} 
The authors would like to thank the anonymous referee for valuable suggestions for improving the presentation. 
In June 2025, the second author's senior colleagues, Xinyu Xu, Chen Zhang, and Zihao Zhang, will complete their doctoral studies. Over the past few years, we have engaged in academic discussions and explored beautiful landscapes together. The author congratulates them on their successful graduation and dedicates this to the memory of our youthful years.

\end{document}